\documentclass[11pt]{amsart}

\usepackage[a4paper,margin=3cm]{geometry}
\usepackage{amsmath,amssymb,mathtools}
\usepackage{enumitem}
\usepackage[hidelinks]{hyperref}
\hypersetup{
  pdftitle={A negative exponent range for Audenaert's complementary McCarthy trace inequality},
  pdfauthor={Xing Li and Bin Zhou}
}

\numberwithin{equation}{section}

\newtheorem{theorem}{Theorem}[section]
\newtheorem{proposition}[theorem]{Proposition}
\newtheorem{lemma}[theorem]{Lemma}

\newtheorem{remark}[theorem]{Remark}

\newcommand{\Tr}{\operatorname{Tr}}

\newcommand{\norm}[1]{\left\lVert #1\right\rVert}

\title[A negative exponent range for a complementary McCarthy inequality]
{A negative exponent range for Audenaert's complementary McCarthy trace inequality}

\author{Xing Li}
\address{Center for Control Theory and Guidance Technology, Harbin Institute of
Technology, Harbin 150001, China}
\email{pipipi1919810@gmail.com}

\author{Bin Zhou}
\address{Center for Control Theory and Guidance Technology, Harbin Institute of
Technology, Harbin 150001, China}
\email{binzhou@gmail.com; binzhou@hit.edu.cn}

\thanks{This paper was supported in part by the National Natural Science
Foundation of China for Distinguished Young Scholars under Grant 62125303; by
the National Natural Science Foundation of China -- ``Qisun Ye'' Science
Foundation under Grant U2441243; by the National Natural Science Foundation of
China under Grants 62573165 and 62521005; by the Natural Science Foundation of
Heilongjiang Province under Grant FG2025F001; and by the Fundamental and
Interdisciplinary Disciplines Breakthrough Plan of the Ministry of Education of
China JYB2025XDXM206.}

\subjclass[2020]{Primary 15A45; Secondary 47A64, 47B10}
\keywords{McCarthy inequality, trace inequality, Schatten norm, Kubo--Ando mean, parallel sum, log-majorization}

\begin{document}

\begin{abstract}
Audenaert introduced a class of complementary McCarthy type trace inequalities in his work on completely monotone functions and Bernstein functions; the same problem was later included in the problem list of Audenaert and Kittaneh.  The known results cover the corresponding directions for \(q\le -2\), \(0<q\le 1\), \(1\le q\le 2\), and \(2\le q\le 3\), while the negative range \(-2<q<0\) was left as a conjectural case.  We prove the negative exponent inequality, in fact for every \(q<0\).  After the inversion \(X=A^{-1}\), \(Y=B^{-1}\), the problem reduces to a trace inequality for the parallel sum \(X:Y\).  The proof uses the Kubo--Ando mean chain, an Ando--Hiai type log-majorization for matrix geometric means, and a finite-dimensional Schatten H\"older inequality, including the quasi-norm range.  We also record that the positive exponent side \(q\ge 3\) follows directly from Audenaert's norm-compression inequality for positive semidefinite \(2\times2\) block matrices, and in fact holds for all \(q\ge 2\).  Finally, we determine the equality case on the negative side: equality holds if and only if \(A=B\).
\end{abstract}

\maketitle

\section{Introduction}

Let \(A,B\) be positive matrices.  McCarthy's trace inequality says that
\begin{equation}\label{eq:mccarthy}
    \Tr(A+B)^q\ge \Tr A^q+\Tr B^q,\qquad q\ge 1,
\end{equation}
while the reverse inequality holds for \(0\le q\le 1\); see McCarthy's work on Schatten classes \cite{McCarthy1967}.  In his study of trace inequalities for completely monotone functions and Bernstein functions, Audenaert obtained complementary estimates involving the mixed term
\[
    \Tr(A^{1/2}BA^{1/2})^{q/2}.
\]
These estimates are not merely perturbations of \eqref{eq:mccarthy}.  In the positive exponent range they strengthen McCarthy's inequality, whereas in the negative exponent range they give nontrivial estimates for inverse powers.

The complementary McCarthy type inequalities in question can be written as follows.  For \(q<0\), and for positive definite \(A,B\), the expected direction is
\begin{equation}\label{eq:direct-q}
    \Tr(A+B)^q
    \le
    \Tr A^q+\Tr B^q
    +(2^q-2)\Tr(A^{1/2}BA^{1/2})^{q/2}.
\end{equation}
For positive exponents, the corresponding reverse inequality is
\begin{equation}\label{eq:reverse-q}
    \Tr(A+B)^q
    \ge
    \Tr A^q+\Tr B^q
    +(2^q-2)\Tr(A^{1/2}BA^{1/2})^{q/2}.
\end{equation}
Audenaert's Corollary 3 in \cite{Audenaert2012Trace} proves \eqref{eq:direct-q} for \(q\le -2\) and \(1\le q\le 2\), and proves \eqref{eq:reverse-q} for \(0<q\le 1\) and \(2\le q\le 3\).  The problem list of Audenaert and Kittaneh \cite[Section 3]{AudenaertKittaneh2012Problems} then singled out the range \(-2<q<0\) and the positive range \(q\ge 3\) as conjectural.  The first of these is the main point of the present paper.  The second is best viewed as a consequence of an earlier result: Audenaert's norm-compression inequality for positive semidefinite \(2\times2\) block matrices \cite{Audenaert2006NormCompression} directly implies \eqref{eq:reverse-q} for all \(q\ge 2\).

Our main result is the following.

\begin{theorem}\label{thm:main}
Let \(A,B\) be positive matrices.
\begin{enumerate}[label=\textup{(\roman*)}]
    \item If \(A,B\) are positive definite, then \eqref{eq:direct-q} holds for every \(q<0\).
    \item If \(A,B\) are positive semidefinite, then \eqref{eq:reverse-q} holds for every \(q\ge 2\).
\end{enumerate}
Thus the negative exponent range \(-2<q<0\) in the Audenaert--Kittaneh problem list is settled, while the positive exponent range \(q\ge 3\) is covered, and strengthened to \(q\ge 2\), by a known norm-compression inequality.
\end{theorem}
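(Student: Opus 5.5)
The plan for part (i) is to substitute \(X=A^{-1}\), \(Y=B^{-1}\) and \(p=-q>0\), and then chain three standard inequalities. For part (ii) it is to apply Audenaert's norm-compression inequality to a suitable \(2\times2\) block matrix and finish with a scalar inequality.

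\emph{Part (i).} Under the substitution, \((A+B)^{-1}=(X^{-1}+Y^{-1})^{-1}=X:Y\), the parallel sum, and \(X:Y=\tfrac12(X!Y)\) with \(X!Y\) the harmonic mean. Also \((A^{1/2}BA^{1/2})^{-1}=X^{1/2}YX^{1/2}\). Set \(T=\Tr(X^{1/2}YX^{1/2})^{p/2}=\Tr|Y^{1/2}X^{1/2}|^{p}\). Then \eqref{eq:direct-q} becomes
\[
2^{-p}\Tr(X!Y)^p+(2-2^{-p})\,T\le \Tr X^p+\Tr Y^p .
\]
The key point is that \(2-2^{-p}>0\) for \(p>0\). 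It therefore suffices to prove the two bounds \(\Tr(X!Y)^p\le T\) and \(2T\le \Tr X^p+\Tr Y^p\).

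The first bound comes from the Kubo--Ando chain \(X!Y\le X\# Y\). Since \(t\mapsto t^p\) is increasing, this gives \(\Tr(X!Y)^p\le \Tr(X\#Y)^p\). Next I use the Ando--Hiai type log-majorization \(\lambda(X\#Y)\prec_{\log}\lambda\bigl((X^{1/2}YX^{1/2})^{1/2}\bigr)\). Log-majorization implies weak majorization of the \(p\)-th powers for every \(p>0\), so \(\Tr(X\#Y)^p\le T\).

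The second bound is Hölder: \(T=\norm{Y^{1/2}X^{1/2}}_p^p\le \norm{Y^{1/2}}_{2p}^p\norm{X^{1/2}}_{2p}^p=(\Tr X^p)^{1/2}(\Tr Y^p)^{1/2}\), followed by AM--GM. For \(0<p<1\) the Schatten quantity is only a quasi-norm. There I justify Hölder through Horn's inequality \(s(Y^{1/2}X^{1/2})\prec_{\log}s(Y^{1/2})s(X^{1/2})\), then Cauchy--Schwarz on the singular-value sums; this works for every \(p>0\).

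\emph{Part (ii).} Let \(Z=[A^{1/2}\ \ B^{1/2}]\) and
\[
M=Z^*Z=\begin{pmatrix}A & A^{1/2}B^{1/2}\\ B^{1/2}A^{1/2} & B\end{pmatrix}\ge0 .
\]
Since \(ZZ^*=A+B\), we have \(\norm{M}_q^q=\Tr(A+B)^q\). The off-diagonal block \(C=A^{1/2}B^{1/2}\) satisfies \(\norm{C}_q^q=\Tr(A^{1/2}BA^{1/2})^{q/2}\). Audenaert's norm-compression inequality for \(q\ge2\) gives \(\norm{M}_q\ge\norm{N}_q\), where
\[
N=\begin{pmatrix}a&c\\ c&b\end{pmatrix},\qquad a=\norm{A}_q,\ b=\norm{B}_q,\ c=\norm{C}_q .
\]
Also \(c^2\le ab\) by the Hölder step above.

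It then remains to prove the scalar inequality
\[
\lambda_+^q+\lambda_-^q\ge a^q+b^q+(2^q-2)c^q,\qquad \lambda_\pm=\tfrac{a+b}2\pm\sqrt{\bigl(\tfrac{a-b}2\bigr)^2+c^2},
\]
for \(0\le c\le\sqrt{ab}\). Equality holds at \(a=b=c\) and at \(c=0\).

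I expect this scalar lemma to be the main obstacle. My first attempt will fix \(a,b\) and regard the difference as a function of \(u=c^2\in[0,ab]\). At \(u=0\) it vanishes, and I will try to show it is nondecreasing, or at least that its value divided by \(u^{q/2}\) is minimized at \(u=ab\). After that I will reduce to the boundary \(c^2=ab\), where \(\lambda_-=0\) and \(\lambda_+=a+b\). There the claim reads \((a+b)^q\ge a^q+b^q+(2^q-2)(ab)^{q/2}\), which follows from expanding in \(t=a/b\) and using convexity of \(t\mapsto(1+t)^q-1-t^q\) relative to \(t^{q/2}\) for \(q\ge2\).

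The equality statement for part (i) should follow by tracing the equality cases: AM--GM forces \(\Tr X^p=\Tr Y^p\), Hölder forces proportionality, and \(X!Y=X\#Y\) forces \(X=Y\).
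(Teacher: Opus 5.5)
Your part (i) is correct and is the paper's own argument. After $X=A^{-1}$, $Y=B^{-1}$ you bound $\Tr(X!Y)^p\le\Tr(X\#Y)^p\le T$ by the mean chain and the Ando--Hiai log-majorization, then $2T\le\Tr X^p+\Tr Y^p$ by Horn, H\"older and AM--GM. This is exactly Theorem~\ref{thm:parallel-asym} at $t=1/2$.

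Part (ii) has a genuine gap, and it lies before the scalar lemma you flag: the step $\norm{M}_q\ge\norm{N}_q$ for $q\ge 2$ is false. Compression in terms of the $2\times2$ matrix of block norms (King's matrix Hanner inequality) goes the other way for $q\ge2$. For $q=3$ you can check this by hand. For $M=\begin{pmatrix}P&Q\\ Q^*&R\end{pmatrix}\ge0$,
\[
\Tr M^3=\Tr P^3+\Tr R^3+3\Tr(PQQ^*)+3\Tr(RQ^*Q),\qquad \Tr N^3=a^3+b^3+3(a+b)c^2 .
\]
H\"older gives $\Tr(PQQ^*)\le\norm{P}_3\norm{Q}_3^2=ac^2$, and likewise for $R$, so $\Tr M^3\le\Tr N^3$.

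The inequality is already strict in your setting. Take $A=\mathrm{diag}(1,\varepsilon)$ and $B=\mathrm{diag}(\varepsilon,1)$. Then $\Tr(A+B)^q=2(1+\varepsilon)^q=2+2q\varepsilon+O(\varepsilon^2)$. With $a=b=(1+\varepsilon^q)^{1/q}$ and $c=2^{1/q}\varepsilon^{1/2}$ one gets $\norm{N}_q^q=2+q(q-1)2^{2/q}\varepsilon+o(\varepsilon)$, and $q(q-1)2^{2/q}>2q$ for $q>2$.

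So block-norm compression only gives an upper bound for $\Tr(A+B)^q$, while you need a lower bound; no scalar lemma can fix this route. Your first plan for the scalar lemma would also fail. For $a=b$ and $q>2$ the difference vanishes at both $u=0$ and $u=ab$ and is positive in between, so it is not monotone in $u$.

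What you need is Audenaert's inequality in its actual form, Lemma~\ref{lem:compression}: for $q\ge2$,
\[
\norm{M}_q^q\ge\norm{P}_q^q+\norm{R}_q^q+(2^q-2)\norm{Q}_q^q .
\]
Apply it to your $M$, which is the same block matrix the paper uses. Combined with $\norm{M}_q^q=\Tr(A+B)^q$ and $\norm{A^{1/2}B^{1/2}}_q^q=\Tr(A^{1/2}BA^{1/2})^{q/2}$, it gives \eqref{eq:reverse-q} immediately, with no scalar reduction.
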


The positive exponent part of Theorem \ref{thm:main} is a bibliographical observation rather than a new block matrix inequality.  We include the proof in Section \ref{sec:positive}, because the substitution is short and clarifies the natural endpoint \(q=2\).

The negative exponent part is the new contribution.  Put \(q=-r\) and make the inversion
\[
    X=A^{-1},\qquad Y=B^{-1}.
\]
Then
\[
    (A+B)^{-1}=X:Y=(X^{-1}+Y^{-1})^{-1},
\]
where \(X:Y\) denotes the parallel sum.  The main estimate is a trace inequality for \(X:Y\).  We first prove a one-parameter asymmetric form in which the mixed term
\[
    \norm{X^{1/2}Y^{1/2}}_r^r
\]
is replaced by
\[
    \norm{X^{1-t}Y^t}_r^r,\qquad 0<t<1.
\]
The symmetric value \(t=1/2\) gives exactly the form needed for \eqref{eq:direct-q}.  We do not claim that the one-parameter constants are optimal when \(t\ne 1/2\); the sharp case needed here is the symmetric one.

All matrices in the paper are finite-dimensional complex matrices.  The trace is the usual non-normalized trace.  For \(p>0\) and a matrix \(Z\), we write
\[
    \norm{Z}_p=(\Tr |Z|^p)^{1/p}.
\]
When \(0<p<1\), this is the Schatten \(p\)-quasi-norm.

\section{Preliminaries}\label{sec:prelim}

For \(X,Y>0\), the parallel sum is
\[
    X:Y=(X^{-1}+Y^{-1})^{-1}.
\]
We use the Kubo--Ando harmonic and geometric means
\[
    X!Y=2(X:Y),\qquad
    X\#Y=X^{1/2}(X^{-1/2}YX^{-1/2})^{1/2}X^{1/2}.
\]
More generally, for \(0<t<1\), set
\[
    X!_tY=((1-t)X^{-1}+tY^{-1})^{-1},
    \qquad
    X\#_tY=X^{1/2}(X^{-1/2}YX^{-1/2})^tX^{1/2}.
\]
These means satisfy the standard weighted mean chain
\begin{equation}\label{eq:weighted-mean-chain}
    X!_tY\le X\#_tY\le (1-t)X+tY.
\end{equation}
At \(t=1/2\) this reduces to
\begin{equation}\label{eq:mean-chain}
    X!Y\le X\#Y\le \frac{X+Y}{2}.
\end{equation}
This is the standard order relation between the harmonic, geometric, and arithmetic Kubo--Ando means; we use it only for finite-dimensional positive definite matrices.  See Kubo--Ando \cite{KuboAndo1980} and Bhatia \cite{Bhatia2007}.

For a matrix \(Z\), let \(s(Z)=(s_1(Z),\ldots,s_n(Z))\) be its singular values in decreasing order.  For a positive matrix \(W\), let \(\lambda(W)\) be its eigenvalue vector in decreasing order.  We write \(\prec_{\log}\) for log-majorization.

\begin{lemma}[Geometric means and products]\label{lem:ando-hiai}
Let \(X,Y>0\) and \(0<t<1\).  Then
\begin{equation}\label{eq:AH-weighted}
    \lambda(X\#_tY)\prec_{\log}s(X^{1-t}Y^t).
\end{equation}
Consequently, for every \(r>0\),
\begin{equation}\label{eq:geomean-trace-bound-weighted}
    \Tr(X\#_tY)^r\le \norm{X^{1-t}Y^t}_r^r.
\end{equation}
In particular, when \(t=1/2\),
\begin{equation}\label{eq:geomean-trace-bound}
    \Tr(X\#Y)^r\le \norm{X^{1/2}Y^{1/2}}_r^r.
\end{equation}
\end{lemma}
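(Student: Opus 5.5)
The plan is to reduce \eqref{eq:AH-weighted} to the Ando--Hiai log-majorization, in the form \(\lambda(X^{1-t}Y^{t})\succ_{\log}\lambda(X\#_tY)\) up to a congruence, and then read off the trace bound from standard facts about log-majorization.

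\emph{Step 1: the determinant equality.} Write \(X\#_tY = X^{1/2}(X^{-1/2}YX^{-1/2})^tX^{1/2}\). Then
\[
\det(X\#_tY)=\det X^{1-t}\det Y^t=\prod_i s_i(X^{1-t}Y^t).
\]
So the products of all \(n\) entries agree on both sides, which is the equality part of log-majorization.

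\emph{Step 2: reduction to the largest entry.} It remains to show \(\prod_{i\le k}\lambda_i(X\#_tY)\le\prod_{i\le k}s_i(X^{1-t}Y^t)\) for each \(k\). Passing to antisymmetric tensor powers, we have \(\Lambda^k(X\#_tY)=\Lambda^kX\#_t\Lambda^kY\), since \(\Lambda^k\) is multiplicative and commutes with positive powers. Likewise \(\Lambda^k(X^{1-t}Y^t)=(\Lambda^kX)^{1-t}(\Lambda^kY)^t\). Hence it suffices to prove the case \(k=1\):
\[
\lambda_1(X\#_tY)\le \norm{X^{1-t}Y^t}_\infty .
\]

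\emph{Step 3: the \(k=1\) inequality, which is the main obstacle.} By homogeneity, \((\alpha X)\#_t(\beta Y)=\alpha^{1-t}\beta^t(X\#_tY)\), and the right side scales by the same factor. So I would normalize \(\norm{X^{1-t}Y^t}_\infty=1\). This means \(Y^tX^{2(1-t)}Y^t\le I\). I then need \(X\#_tY\le I\).

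The first attempt is a Löwner--Heinz argument. From \(\norm{X^{1-t}Y^{t}}\le1\) we get \(X^{2(1-t)}\le Y^{-2t}\). The plan is to rescale exponents so that this becomes \(X\le Y^{-t/(1-t)}\) after taking suitable powers. This step needs care: Löwner--Heinz only allows powers in \([0,1]\), so the case split \(t\le 1/2\) versus \(t>1/2\) may be needed, and by the symmetry \(X\#_tY=Y\#_{1-t}X\) one can assume the convenient case.

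Once \(X\le Y^{-s}\) with \(s=t/(1-t)\) is available, monotonicity of the mean gives
\[
X\#_tY\le Y^{-s}\#_tY=Y^{-s(1-t)+t}=I,
\]
because \(Y^{-s}\) and \(Y\) commute. If the direct Löwner--Heinz route fails for the unfavorable exponent range, I would fall back on the Ando--Hiai inequality \(\norm{X^{1/p}\#_tY^{1/p}}\) monotonicity, or simply cite Ando--Hiai's theorem \(\lambda(X\#_tY)\prec_{\log}\lambda(X^{1-t}Y^t)\) directly. The eigenvalues of \(X^{1-t}Y^t\) are log-majorized by its singular values by Weyl's inequality, which gives \eqref{eq:AH-weighted}.

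\emph{Step 4: the trace consequences.} Log-majorization of nonnegative vectors implies weak majorization of \(\phi\) applied to both sides, for any \(\phi\) such that \(\phi(e^x)\) is convex and increasing. Taking \(\phi(u)=u^r\) with \(r>0\), the function \(e^{rx}\) is convex and increasing. Hence
\[
\sum_i\lambda_i(X\#_tY)^r\le\sum_i s_i(X^{1-t}Y^t)^r,
\]
which is \eqref{eq:geomean-trace-bound-weighted}, valid even in the quasi-norm range \(0<r<1\). Setting \(t=1/2\) gives \eqref{eq:geomean-trace-bound}.
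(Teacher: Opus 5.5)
Your proof is correct, and it takes a different and more complete route than the paper. The paper does not prove the log-majorization \eqref{eq:AH-weighted}: it cites Ando--Hiai and Chapter IX of Bhatia, and only carries out the passage to the trace bound, which is your Step 4.

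Your Steps 1--3 prove it directly. You use the determinant identity, then reduce by antisymmetric powers to $\lambda_1(X\#_tY)\le\norm{X^{1-t}Y^t}_\infty$. The $k=1$ step works as you sketch, so the fallback is unnecessary:
\begin{itemize}
\item For $t\le 1/2$, the exponent $1/(2(1-t))$ lies in $(0,1]$. L\"owner--Heinz therefore turns $X^{2(1-t)}\le Y^{-2t}$ into $X\le Y^{-t/(1-t)}$.
\item For $t>1/2$, use $X\#_tY=Y\#_{1-t}X$ and $\norm{X^{1-t}Y^t}_\infty=\norm{Y^{t}X^{1-t}}_\infty$ (adjoint invariance). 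This is the same problem with $X,Y$ swapped and parameter $1-t<1/2$.
\end{itemize}
At $t=1/2$, the only case the main theorem uses, you do not even need L\"owner--Heinz: $Y^{1/2}XY^{1/2}\le I$ says $X\le Y^{-1}$, so $X\#Y\le Y^{-1}\#Y=I$.

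In Step 2, note also that $\Lambda^k$ commutes with the negative power $X^{-1/2}$ appearing in $X\#_tY$. This follows immediately from multiplicativity.

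What each approach buys: your route is self-contained and uses only L\"owner--Heinz, joint monotonicity of Kubo--Ando means, and multiplicativity of $\Lambda^k$. It also lands directly on the singular values of $X^{1-t}Y^t$. The citation buys brevity and access to stronger Ando--Hiai statements, such as $\lambda(X\#_tY)\prec_{\log}\lambda(e^{(1-t)\log X+t\log Y})$, which are not needed here.
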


\begin{proof}
The log-majorization \eqref{eq:AH-weighted} is a standard form of the Ando--Hiai log-majorization theory for matrix geometric means.  Our convention for the weighted geometric mean is
\[
    X\#_tY=X^{1/2}(X^{-1/2}YX^{-1/2})^tX^{1/2},
\]
and the right-hand side of \eqref{eq:AH-weighted} denotes the singular values of the product \(X^{1-t}Y^t\).  This form follows from the log-majorization results of Ando--Hiai \cite{AndoHiai1994}; see also Bhatia \cite[Chapter IX]{Bhatia1997} for the discussion of geometric means and log-majorization.  We only use the finite-dimensional positive definite case.

If \(x\prec_{\log}y\) and the entries are non-negative and arranged in decreasing order, then for every \(r>0\), the vector \((x_1^r,\ldots,x_n^r)\) is weakly majorized by \((y_1^r,\ldots,y_n^r)\).  Hence
\[
    \sum_j \lambda_j(X\#_tY)^r
    \le
    \sum_j s_j(X^{1-t}Y^t)^r,
\]
which is \eqref{eq:geomean-trace-bound-weighted}.  Taking \(t=1/2\) gives \eqref{eq:geomean-trace-bound}.
\end{proof}

\begin{lemma}[Trace monotonicity and equality]\label{lem:trace-monotone}
If \(0\le C\le D\), then for every \(r>0\),
\[
    \Tr C^r\le \Tr D^r.
\]
Moreover, if equality holds, then \(C=D\).
\end{lemma}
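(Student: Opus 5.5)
The plan is to compare eigenvalues via Weyl's monotonicity principle, and then use the fact that a trace function with strictly increasing scalar function detects equality.

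\emph{Inequality.} If \(0\le C\le D\), then the Courant--Fischer min-max characterization gives
\[
    \lambda_j(C)\le\lambda_j(D)\qquad\text{for every } j,
\]
with both eigenvalue vectors arranged in decreasing order. The map \(x\mapsto x^r\) is increasing on \([0,\infty)\) for \(r>0\). Hence \(\lambda_j(C)^r\le\lambda_j(D)^r\) for each \(j\), and summing over \(j\) gives \(\Tr C^r\le\Tr D^r\).

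\emph{Equality case.} Suppose \(\Tr C^r=\Tr D^r\).
\begin{enumerate}[label=\textup{(\arabic*)}]
    \item Each difference \(\lambda_j(D)^r-\lambda_j(C)^r\) is non-negative and the differences sum to zero. So every difference vanishes.
    \item Since \(x\mapsto x^r\) is strictly increasing on \([0,\infty)\), this gives \(\lambda_j(C)=\lambda_j(D)\) for all \(j\).
    \item In particular \(\Tr C=\Tr D\), that is, \(\Tr(D-C)=0\).
    \item The matrix \(D-C\) is positive semidefinite with zero trace. Therefore all its eigenvalues are zero, so \(D-C=0\) and \(C=D\).
\end{enumerate}

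The key step is the passage from equality of traces of \(r\)-th powers to \(\Tr(D-C)=0\). This avoids any appeal to operator monotonicity of \(x^r\), which fails for \(r>1\). The only inputs are Weyl's monotonicity principle and the strict monotonicity of the scalar function \(x\mapsto x^r\), both valid for every \(r>0\). I therefore expect no real obstacle.
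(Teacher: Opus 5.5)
Your proof is correct and follows the paper's own argument: Courant--Fischer (Weyl) monotonicity gives \(\lambda_j(C)\le\lambda_j(D)\). In the equality case you force termwise equality and conclude from \(\Tr(D-C)=0\) with \(D-C\ge 0\); your explicit appeal to strict monotonicity of \(x\mapsto x^r\) only spells out a step the paper leaves implicit.
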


\begin{proof}
By the min-max principle, the eigenvalues in decreasing order satisfy
\[
    \lambda_j(C)\le \lambda_j(D),\qquad j=1,\ldots,n.
\]
Taking the positive power \(r\) and summing gives \(\Tr C^r\le \Tr D^r\).

If \(\Tr C^r=\Tr D^r\), then every scalar inequality
\[
    \lambda_j(C)^r\le \lambda_j(D)^r
\]
must be an equality.  Thus \(\Tr C=\Tr D\).  Since \(D-C\ge 0\), we get
\[
    \Tr(D-C)=0,
\]
and hence \(D-C=0\).
\end{proof}

\begin{lemma}[Finite-dimensional Schatten H\"older inequality]\label{lem:holder}
Let \(p,q,r>0\) and \(1/r=1/p+1/q\).  For all compatible matrices \(U,V\),
\[
    \norm{UV}_r\le \norm{U}_p\norm{V}_q.
\]
In particular, for \(X,Y>0\), \(r>0\), and \(0<t<1\),
\begin{equation}\label{eq:holder-weighted-special}
    \norm{X^{1-t}Y^t}_r^r
    \le
    (\Tr X^r)^{1-t}(\Tr Y^r)^t.
\end{equation}
At \(t=1/2\),
\begin{equation}\label{eq:holder-special}
    \norm{X^{1/2}Y^{1/2}}_r^r
    \le
    (\Tr X^r)^{1/2}(\Tr Y^r)^{1/2}
    \le
    \frac{\Tr X^r+\Tr Y^r}{2}.
\end{equation}
\end{lemma}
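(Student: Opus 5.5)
The plan is to prove the general H\"older inequality from the singular value log-majorization for products, and then obtain the special cases by choosing exponents. The general statement splits into two regimes: \(r\ge 1\), where \(\norm{\cdot}_r\) is a norm, and \(0<r<1\), where it is only a quasi-norm. A single argument handles both, because it works directly with singular values rather than with norm duality.

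First, Horn's inequality \(\prod_{j\le k}s_j(UV)\le\prod_{j\le k}s_j(U)s_j(V)\) for \(k=1,\ldots,n\) gives the log-majorization
\[
s(UV)\prec_{\log}\bigl(s_j(U)s_j(V)\bigr)_j .
\]
For rectangular compatible matrices, we pad with zeros to square size; this changes no nonzero singular values. Exactly as in the proof of Lemma \ref{lem:ando-hiai}, log-majorization of nonnegative decreasing vectors implies weak majorization of their \(r\)-th powers for every \(r>0\). Hence
\[
\Tr|UV|^r\le\sum_j s_j(U)^r s_j(V)^r .
\]
Next we apply the scalar H\"older inequality with exponents \(p/r\) and \(q/r\). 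These are conjugate, since \(r/p+r/q=1\) and both exceed \(1\). This gives
\[
\sum_j s_j(U)^r s_j(V)^r\le\Bigl(\sum_j s_j(U)^p\Bigr)^{r/p}\Bigl(\sum_j s_j(V)^q\Bigr)^{r/q}.
\]
Taking \(r\)-th roots yields \(\norm{UV}_r\le\norm{U}_p\norm{V}_q\). The main point to check carefully is the passage from \(\prec_{\log}\) to weak majorization of \(r\)-th powers for small \(r\). It rests on the convexity and monotonicity of \(x\mapsto e^{rx}\), so it holds for every \(r>0\), and it also handles zero singular values by a limiting argument.

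For \eqref{eq:holder-weighted-special}, take \(U=X^{1-t}\), \(V=Y^t\), \(p=r/(1-t)\) and \(q=r/t\), so that \(1/p+1/q=1/r\). Then \(\norm{U}_p^p=\Tr X^{(1-t)p}=\Tr X^r\), which gives \(\norm{U}_p^r=(\Tr X^r)^{1-t}\). Similarly \(\norm{V}_q^r=(\Tr Y^r)^t\). Raising the general inequality to the \(r\)-th power gives the claim.

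For \eqref{eq:holder-special}, set \(t=1/2\) and then apply the scalar AM--GM inequality \(\sqrt{ab}\le (a+b)/2\).

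The only real obstacle is the quasi-norm range \(r<1\), where Minkowski-based or duality-based proofs of H\"older's inequality fail. Routing the argument through Horn's log-majorization avoids this issue entirely.
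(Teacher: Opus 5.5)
Your proposal is correct and follows the paper's proof essentially step for step: Horn's inequality gives log-majorization of $s(UV)$ by $(s_j(U)s_j(V))_j$, this passes to weak majorization of $r$-th powers for every $r>0$, the scalar H\"older inequality with conjugate exponents $p/r$ and $q/r$ gives the general inequality, and the choices $p=r/(1-t)$, $q=r/t$ together with AM--GM give the special cases. Horn's inequality on its own only gives the weak log-majorization that the paper states, and that is all the argument needs. Your stronger full log-majorization claim is still true after padding to square size, because $|\det(UV)|=|\det U|\,|\det V|$.
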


\begin{proof}
We first prove the general form.  In finite dimensions, Horn's singular value product inequality gives the weak log-majorization
\[
    s(UV)\prec_{w\log} s(U)s(V),
\]
where the right-hand side is the componentwise product of the singular value vectors, both arranged in decreasing order; see Bhatia \cite[Chapter III]{Bhatia1997}.  Raising entries to the power \(r\), summing, and using the summation property of weak majorization gives
\[
    \norm{UV}_r^r
    =\sum_j s_j(UV)^r
    \le
    \sum_j s_j(U)^r s_j(V)^r.
\]
Since \(1/r=1/p+1/q\), we have \(p/r>1\), \(q/r>1\), and
\[
    \frac{1}{p/r}+\frac{1}{q/r}=1.
\]
The scalar H\"older inequality yields
\[
    \sum_j s_j(U)^r s_j(V)^r
    \le
    \left(\sum_j s_j(U)^p\right)^{r/p}
    \left(\sum_j s_j(V)^q\right)^{r/q}
    =
    \norm{U}_p^r\norm{V}_q^r.
\]
Taking \(r\)-th roots proves \(\norm{UV}_r\le \norm{U}_p\norm{V}_q\).  No triangle inequality for \(\norm{\cdot}_p\) is used, so \(p,q,r\) may be smaller than \(1\).

Now take \(U=X^{1-t}\), \(V=Y^t\), \(p=r/(1-t)\), and \(q=r/t\).  Then
\[
    \norm{X^{1-t}Y^t}_r
    \le
    \norm{X^{1-t}}_{r/(1-t)}
    \norm{Y^t}_{r/t}.
\]
Since \(X,Y\) are positive,
\[
    \norm{X^{1-t}}_{r/(1-t)}=(\Tr X^r)^{(1-t)/r},
    \qquad
    \norm{Y^t}_{r/t}=(\Tr Y^r)^{t/r}.
\]
Raising to the power \(r\) gives \eqref{eq:holder-weighted-special}.  The first inequality in \eqref{eq:holder-special} is the case \(t=1/2\), and the second one is the scalar arithmetic-geometric mean inequality.
\end{proof}

We also need the following norm-compression inequality of Audenaert.

\begin{lemma}[Audenaert's norm-compression inequality]\label{lem:compression}
Let
\[
    M=\begin{pmatrix}P&Q\\ Q^*&R\end{pmatrix}\ge 0,
\]
where \(P\) and \(R\) are square diagonal blocks.  If \(p\ge 2\), then
\begin{equation}\label{eq:compression}
    \norm{M}_p^p
    \ge
    \norm{P}_p^p+\norm{R}_p^p+(2^p-2)\norm{Q}_p^p.
\end{equation}
For \(1\le p\le 2\), the inequality is reversed.
\end{lemma}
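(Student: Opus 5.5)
This lemma is quoted from \cite{Audenaert2006NormCompression} and the paper may simply cite it. If I were to prove it, I would split it into a genuinely matricial step and an elementary scalar step. The matricial step is the \emph{norm-compression} estimate for positive semidefinite $2\times 2$ block matrices. With
\[
    a=\norm{P}_p,\qquad b=\norm{Q}_p,\qquad c=\norm{R}_p,
\]
it asserts that
\[
    \norm{M}_p\ge\norm{\begin{pmatrix}a&b\\ b&c\end{pmatrix}}_p\quad(p\ge 2),
\]
and that the reverse inequality holds for $1\le p\le 2$. The scalar matrix here is positive semidefinite, because $\norm{Q}_p^2\le\norm{P}_p\norm{R}_p$. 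To see this, factor $M=Z^*Z$ with $Z=(Z_1\ Z_2)$, so that $P=Z_1^*Z_1$, $R=Z_2^*Z_2$ and $Q=Z_1^*Z_2$. Then apply Lemma~\ref{lem:holder} with exponents $2p,2p$, which gives $\norm{Q}_p\le\norm{Z_1}_{2p}\norm{Z_2}_{2p}=a^{1/2}c^{1/2}$.

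The second step is the scalar inequality: for $a,c\ge 0$ and $b^2\le ac$,
\[
    \lambda_+^p+\lambda_-^p\ge a^p+c^p+(2^p-2)b^p\qquad(p\ge 2),
\]
with the reverse for $1\le p\le 2$. Here $\lambda_\pm=\tfrac{a+c}{2}\pm\sqrt{\big(\tfrac{a-c}{2}\big)^2+b^2}$ are the eigenvalues of the scalar matrix. I would fix $a,c$ and study
\[
    \phi(b)=\lambda_+^p+\lambda_-^p-a^p-c^p-(2^p-2)b^p
\]
on $0\le b\le\sqrt{ac}$. One checks that $\phi(0)=0$. One also checks that at the endpoint $b=\sqrt{ac}$ the matrix has rank one, so $\phi=(a+c)^p-a^p-c^p-(2^p-2)(ac)^{p/2}$. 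After normalising $a+c=2$ this is a one-variable McCarthy-type inequality, which is sharp at $a=c$. Combining a convexity/concavity argument in the variable $b^2$ (for $p\ge 2$, versus $1\le p\le2$) with these two endpoint checks gives the sign of $\phi$. The combination of the two steps is the monotonicity of $x\mapsto x^p$ applied to the norm-compression step, since $\norm{M}_p^p\ge\lambda_+^p+\lambda_-^p$.

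The main obstacle is the matricial compression step; the scalar step is routine calculus. For that step I would follow Audenaert's route in \cite{Audenaert2006NormCompression}.

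\begin{itemize}
    \item First, reduce to the case $P=R$, using unitary conjugations and the unitarily invariant setting, so that only the relative sizes of the blocks matter.
    \item Next, write $\norm{M}_p^p=\Tr(Z_1Z_1^*+Z_2Z_2^*)^p$ and interpolate in the off-diagonal weight. Consider $M_\theta=\begin{pmatrix}P&\theta Q\\ \theta Q^*&R\end{pmatrix}$, and compare $\tfrac{d}{d\theta}\norm{M_\theta}_p^p$ with the corresponding derivative for the scalar matrix.
    \item For $p\ge2$, control the derivative via the convexity of $x\mapsto x^{p/2}$ together with a Hölder estimate as in Lemma~\ref{lem:holder}.
\end{itemize}

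If this direct comparison fails, the fallback is to prove only the weaker statement actually needed for \eqref{eq:compression}. That is, prove the inequality directly at the level of $\Tr(S+T)^{p}$ against $\Tr S^p+\Tr T^p+(2^p-2)\norm{Q}_p^p$, where $S=Z_1Z_1^*$ and $T=Z_2Z_2^*$. The tools would be the block structure and the endpoint case $p=2$, where equality holds identically, followed by an interpolation in $p$. I expect this to be the delicate part.
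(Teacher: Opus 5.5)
The paper gives no proof here beyond citing Audenaert's Theorem~1 (for $1\le p\le 2$) and Corollary~1 (for $p\ge 2$, by duality), so your first sentence is right. The proof you then sketch, however, fails at its first step.

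With $m=\begin{pmatrix}a&b\\ b&c\end{pmatrix}$, your claimed compression $\norm{M}_p\ge\norm{m}_p$ for $p\ge 2$ (reversed for $1\le p\le 2$) is false. The inequality that actually holds, King's $2\times 2$ block inequality, goes the other way. For $p=3$ this is elementary: by H\"older,
\[
\Tr M^3=\Tr P^3+\Tr R^3+3\Tr(PQQ^*)+3\Tr(RQ^*Q)\le a^3+c^3+3(a+c)b^2=\Tr m^3 .
\]
Here is a concrete counterexample in both ranges. Take $P=I_2$ and $R=Q=\mathrm{diag}(0,1)$. Then $M$ is PSD with eigenvalues $2,1,0,0$, so for every $p$,
\[
\norm{M}_p^p=1+2^p=\norm{P}_p^p+\norm{R}_p^p+(2^p-2)\norm{Q}_p^p .
\]
On the other hand $a=2^{1/p}$ and $b=c=1$. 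At $p=3$ this gives $\Tr m^3=6+3\cdot 2^{1/3}\approx 9.78>9=\Tr M^3$. At $p=3/2$ it gives $\norm{m}_p^p\approx 3.70<3.83\approx\norm{M}_p^p$. So your intermediate inequality fails on both sides of $p=2$.

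This cannot be repaired within your scheme. Your scalar step is the lemma itself for $1\times 1$ blocks. It places $a^p+c^p+(2^p-2)b^p$ on the same side of $\norm{m}_p^p$ as the true compression inequality places $\norm{M}_p^p$. So routing through the matrix of norms gives no comparison between $\norm{M}_p^p$ and the additive bound. The example above makes this concrete: it attains equality in the lemma for every $p$, yet has $\norm{M}_p^p$ strictly on the wrong side of $\norm{m}_p^p$.

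The additive inequality has to be proved directly at the matrix level, which is what Audenaert does. Both of its sides are additive under blockwise direct sums, while $\norm{m}_p^p$ is not. That is why direct sums of equality cases remain equality cases; the example is built from the $1\times 1$-block pieces $\begin{pmatrix}1&0\\0&0\end{pmatrix}$ and $\begin{pmatrix}1&1\\1&1\end{pmatrix}$.

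The remaining items in your plan are not yet arguments.
\begin{enumerate}[label=\textup{(\roman*)}]
\item The case $P=R$ cannot be reached by unitary conjugation, since block-diagonal unitaries preserve the spectra of $P$ and $R$. For the additive form you could instead pass to the blockwise direct sum of $M$ with $\begin{pmatrix}R&Q^*\\ Q&P\end{pmatrix}$ and then conjugate.
\item The derivative comparison in the off-diagonal weight is not specified enough to check.
\item The ``interpolation in $p$'' from the identity at $p=2$ is likewise not specified enough to check.
\end{enumerate}
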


\begin{proof}
This is Audenaert's norm-compression inequality for positive semidefinite \(2\times2\) block matrices \cite[Theorem 1 and Corollary 1]{Audenaert2006NormCompression}.  The hypotheses in the cited result are precisely that the block matrix is positive semidefinite and that the diagonal blocks are square.  Theorem 1 of \cite{Audenaert2006NormCompression} gives the direction \(1\le p\le 2\), while Corollary 1 gives the reverse direction for \(p\ge 2\) by duality.
\end{proof}

\section{A trace inequality for parallel sums}\label{sec:parallel}

The negative exponent inequality will follow from a trace estimate for the parallel sum.  We first prove a one-parameter asymmetric form.  For \(0<t<1\), set
\[
    m_t=\max\{t,1-t\}.
\]

\begin{theorem}[A one-parameter parallel sum inequality]\label{thm:parallel-asym}
Let \(X,Y>0\), \(r>0\), and \(0<t<1\).  Then
\begin{equation}\label{eq:parallel-asym}
    \Tr(X:Y)^r+
    \bigl(m_t^{-1}-m_t^r\bigr)\norm{X^{1-t}Y^t}_r^r
    \le
    \Tr X^r+\Tr Y^r .
\end{equation}
\end{theorem}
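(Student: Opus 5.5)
The plan is to bound both terms on the left of \eqref{eq:parallel-asym} by the single quantity \(N:=\norm{X^{1-t}Y^t}_r^r\), and then to control \(N\) by \(\Tr X^r+\Tr Y^r\) using Lemma~\ref{lem:holder}. By the symmetry \(X\leftrightarrow Y\), \(t\leftrightarrow 1-t\) I would like to reduce to \(t\le 1/2\). However, \(\norm{Y^{1-t}X^{t}}_r=\norm{(X^{t}Y^{1-t})^*}_r\), and this is not obviously equal to \(\norm{X^{1-t}Y^t}_r\). So I will not rely on the symmetry. Instead I treat both cases directly, writing \(m=m_t\) and noting that the argument only uses \(\max\{1-t,t\}\).

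\textbf{Step 1: compare the parallel sum with the weighted harmonic mean.} Since \(1-t\le m\) and \(t\le m\), we have
\[
    (1-t)X^{-1}+tY^{-1}\le m\,(X^{-1}+Y^{-1}).
\]
Operator inversion is order reversing, so
\[
    X!_tY\ge m^{-1}(X:Y),\qquad\text{that is,}\qquad X:Y\le m\,(X!_tY).
\]
Combining this with the first inequality of the weighted mean chain \eqref{eq:weighted-mean-chain} gives \(0\le X:Y\le m\,(X\#_tY)\).

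\textbf{Step 2: pass to traces.} Lemma~\ref{lem:trace-monotone} turns the operator inequality of Step~1 into
\[
    \Tr(X:Y)^r\le m^r\Tr(X\#_tY)^r.
\]
Lemma~\ref{lem:ando-hiai}, in the form \eqref{eq:geomean-trace-bound-weighted}, then bounds the right side, so that \(\Tr(X:Y)^r\le m^rN\).

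\textbf{Step 3: absorb the mixed term.} Because \(0<m<1\), the coefficient \(m^{-1}-m^r\) is positive. Adding the mixed term to Step~2 therefore gives
\[
    \Tr(X:Y)^r+(m^{-1}-m^r)N\le m^rN+(m^{-1}-m^r)N=m^{-1}N.
\]

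\textbf{Step 4: control \(N\).} By \eqref{eq:holder-weighted-special} and the weighted scalar AM--GM inequality,
\[
    N\le(\Tr X^r)^{1-t}(\Tr Y^r)^t\le(1-t)\Tr X^r+t\Tr Y^r\le m\,(\Tr X^r+\Tr Y^r).
\]
Hence \(m^{-1}N\le\Tr X^r+\Tr Y^r\), which together with Step~3 proves \eqref{eq:parallel-asym}.

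The only genuinely nontrivial inputs are the Ando--Hiai log-majorization and the quasi-norm H\"older inequality, and both are already available from Section~\ref{sec:prelim}. The main point requiring care is Step~1: choosing the constant \(m\) in the comparison between \(X:Y\) and \(X!_tY\) so that it matches exactly the factor lost in the AM--GM step. Using \(m_t=\max\{t,1-t\}\) in both places is what makes the constants \(m^{-1}\) and \(m^r\) combine correctly. The remaining steps are routine.
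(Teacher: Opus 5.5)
Your proposal is correct and follows the paper's proof essentially step for step. Both arguments run the same chain: the comparison \(X:Y\le m_t(X!_tY)\) from \((1-t)X^{-1}+tY^{-1}\le m_t(X^{-1}+Y^{-1})\), then the weighted mean chain, then Lemmas \ref{lem:trace-monotone} and \ref{lem:ando-hiai} to get \(\Tr(X:Y)^r\le m_t^r N\), and finally H\"older plus weighted AM--GM to get \(m_t^{-1}N\le\Tr X^r+\Tr Y^r\). The positivity of \(m_t^{-1}-m_t^r\) that you note in Step~3 is true but not actually needed, since adding the same term to both sides is valid regardless of its sign.
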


\begin{proof}
Put
\[
    H=X:Y,\qquad
    G_t=\norm{X^{1-t}Y^t}_r^r,\qquad
    S=\Tr X^r+\Tr Y^r.
\]
Since
\[
    (1-t)X^{-1}+tY^{-1}
    \le
    m_t(X^{-1}+Y^{-1}),
\]
inverting gives
\[
    X:Y\le m_t(X!_tY).
\]
By \eqref{eq:weighted-mean-chain},
\[
    X:Y\le m_t(X\#_tY).
\]
Using Lemmas \ref{lem:trace-monotone} and \ref{lem:ando-hiai}, we get
\begin{equation}\label{eq:asym-H-estimate}
    \Tr H^r
    \le
    m_t^r\Tr(X\#_tY)^r
    \le
    m_t^r G_t.
\end{equation}
On the other hand, Lemma \ref{lem:holder}, in the form \eqref{eq:holder-weighted-special}, gives
\begin{equation}\label{eq:asym-holder}
    G_t
    =
    \norm{X^{1-t}Y^t}_r^r
    \le
    (\Tr X^r)^{1-t}(\Tr Y^r)^t.
\end{equation}
The scalar weighted arithmetic-geometric mean inequality yields
\[
    (\Tr X^r)^{1-t}(\Tr Y^r)^t
    \le
    (1-t)\Tr X^r+t\Tr Y^r
    \le
    m_tS.
\]
Hence
\begin{equation}\label{eq:asym-G-estimate}
    m_t^{-1}G_t\le S.
\end{equation}
Combining \eqref{eq:asym-H-estimate} and \eqref{eq:asym-G-estimate}, we obtain
\[
    \Tr H^r+
    \bigl(m_t^{-1}-m_t^r\bigr)G_t
    \le
    m_t^rG_t+
    \bigl(m_t^{-1}-m_t^r\bigr)G_t
    =
    m_t^{-1}G_t
    \le S.
\]
This is \eqref{eq:parallel-asym}.
\end{proof}

\begin{remark}
The one-parameter form \eqref{eq:parallel-asym} is used as a natural proof framework for the symmetric case.  We do not claim that the constant is optimal for every \(t\ne 1/2\).
\end{remark}

\begin{theorem}[The symmetric parallel sum inequality]\label{thm:parallel}
Let \(X,Y>0\) and \(r>0\).  Then
\begin{equation}\label{eq:parallel-theorem}
    \Tr(X:Y)^r+
    \left(2-2^{-r}\right)\norm{X^{1/2}Y^{1/2}}_r^r
    \le
    \Tr X^r+\Tr Y^r.
\end{equation}
The coefficient \(2-2^{-r}\) is optimal, in the sense that it cannot be increased uniformly over all positive definite \(X,Y\).
\end{theorem}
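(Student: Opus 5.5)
The inequality \eqref{eq:parallel-theorem} is the case \(t=1/2\) of Theorem \ref{thm:parallel-asym}. Here \(m_{1/2}=1/2\), so the coefficient \(m_t^{-1}-m_t^r\) equals \(2-2^{-r}\), and \(\norm{X^{1-t}Y^t}_r^r\) becomes \(\norm{X^{1/2}Y^{1/2}}_r^r\). It is still worth recording the direct chain, since it is what the optimality and equality arguments will use. From \(X:Y=\tfrac12(X!Y)\le\tfrac12(X\#Y)\), which is \eqref{eq:mean-chain}, Lemmas \ref{lem:trace-monotone} and \ref{lem:ando-hiai} give
\[
\Tr(X:Y)^r\le 2^{-r}\norm{X^{1/2}Y^{1/2}}_r^r .
\]
Lemma \ref{lem:holder}, in the form \eqref{eq:holder-special}, gives
\[
2\norm{X^{1/2}Y^{1/2}}_r^r\le \Tr X^r+\Tr Y^r .
\]
Adding \((2-2^{-r})\norm{X^{1/2}Y^{1/2}}_r^r\) to the first bound yields \eqref{eq:parallel-theorem}.

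For optimality, I would test the inequality at \(X=Y\). Then \(X:Y=X/2\) and \(\norm{X^{1/2}X^{1/2}}_r^r=\Tr X^r\). The inequality with a general coefficient \(c\) reads
\[
2^{-r}\Tr X^r+c\,\Tr X^r\le 2\Tr X^r .
\]
Since \(\Tr X^r>0\), this forces \(c\le 2-2^{-r}\). So no uniform increase of the coefficient is possible, and in fact the case \(X=Y\) attains equality.

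There is no real obstacle beyond checking the arithmetic of the constant \(2-2^{-r}\) and confirming that every step remains valid for \(0<r<1\). That is already built into Lemma \ref{lem:holder}, which is stated for quasi-norms, and into Lemma \ref{lem:ando-hiai}, which holds for every \(r>0\).
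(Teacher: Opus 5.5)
Your proposal is correct and follows the paper's proof. You specialize Theorem \ref{thm:parallel-asym} at \(t=1/2\); your direct chain is just that theorem's proof written out at \(t=1/2\). You then establish optimality by testing at \(X=Y\), where equality holds.
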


\begin{proof}
Take \(t=1/2\) in Theorem \ref{thm:parallel-asym}.  Then \(m_t=1/2\) and
\[
    m_t^{-1}-m_t^r=2-2^{-r},
\]
which gives \eqref{eq:parallel-theorem}.

If \(X=Y\), then \(X:Y=X/2\) and
\[
    \norm{X^{1/2}Y^{1/2}}_r^r=\Tr X^r.
\]
Thus \eqref{eq:parallel-theorem} is an equality at \(X=Y\).  Increasing the coefficient would therefore make the inequality fail at this point.
\end{proof}

\section{The negative exponent range}\label{sec:negative}

We now derive the negative exponent part of Theorem \ref{thm:main}.

\begin{theorem}\label{thm:negative}
Let \(A,B>0\) and \(q<0\).  Then
\[
    \Tr(A+B)^q
    \le
    \Tr A^q+\Tr B^q+
    (2^q-2)\Tr(A^{1/2}BA^{1/2})^{q/2}.
\]
\end{theorem}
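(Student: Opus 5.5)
We derive the statement from Theorem \ref{thm:parallel} by the inversion described in the introduction. Put \(r=-q>0\), \(X=A^{-1}\) and \(Y=B^{-1}\), which are positive definite. Then \((A+B)^{-1}=(X^{-1}+Y^{-1})^{-1}=X:Y\), so
\[
\Tr(A+B)^q=\Tr\bigl((A+B)^{-1}\bigr)^r=\Tr(X:Y)^r .
\]
Similarly \(\Tr A^q=\Tr X^r\) and \(\Tr B^q=\Tr Y^r\). After this translation, the claimed inequality becomes
\[
\Tr(X:Y)^r\le \Tr X^r+\Tr Y^r+(2^{-r}-2)\,T,
\qquad T:=\Tr(A^{1/2}BA^{1/2})^{q/2}.
\]

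The key step is to identify the mixed term \(T\) with \(\norm{X^{1/2}Y^{1/2}}_r^r\). The matrix \(A^{1/2}BA^{1/2}\) is positive definite. Its eigenvalues are the squared singular values of \(B^{1/2}A^{1/2}\), and these coincide with the squared singular values of its adjoint \(A^{1/2}B^{1/2}\). Hence
\[
T=\sum_j s_j(A^{1/2}B^{1/2})^{q}=\sum_j s_j(A^{1/2}B^{1/2})^{-r}.
\]
Since \(A^{1/2}B^{1/2}\) is invertible, the singular values of its inverse \(B^{-1/2}A^{-1/2}=Y^{1/2}X^{1/2}\) are the reciprocals \(s_j(A^{1/2}B^{1/2})^{-1}\), taken in reversed order. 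The order is irrelevant because we sum over all \(j\). Therefore
\[
T=\norm{Y^{1/2}X^{1/2}}_r^r=\norm{(X^{1/2}Y^{1/2})^*}_r^r=\norm{X^{1/2}Y^{1/2}}_r^r .
\]

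With this identity, the target reads
\[
\Tr(X:Y)^r+(2-2^{-r})\norm{X^{1/2}Y^{1/2}}_r^r\le \Tr X^r+\Tr Y^r ,
\]
which is exactly \eqref{eq:parallel-theorem} in Theorem \ref{thm:parallel}. This completes the argument. The only step requiring care is the identification of the mixed term under inversion, specifically the fact that the singular values of an inverse are the reciprocals of the original ones. Everything else is direct substitution.
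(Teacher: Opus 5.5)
Your proposal is correct and follows the paper's proof: it uses the same inversion \(X=A^{-1}\), \(Y=B^{-1}\) with \((A+B)^{-1}=X:Y\), the same identification of the mixed term with \(\norm{X^{1/2}Y^{1/2}}_r^r\), and the same reduction to Theorem~\ref{thm:parallel}. The only difference is cosmetic: you justify the mixed-term identity using reciprocal singular values of \(A^{1/2}B^{1/2}\), while the paper inverts \(A^{1/2}BA^{1/2}\) to \(X^{1/2}YX^{1/2}\) and compares its spectrum with that of \((X^{1/2}Y^{1/2})^*(X^{1/2}Y^{1/2})\), and both justifications are valid.
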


\begin{proof}
Write \(q=-r\), where \(r>0\), and set
\[
    X=A^{-1},\qquad Y=B^{-1}.
\]
Then
\begin{equation}\label{eq:AB-parallel}
    (A+B)^{-1}=(X^{-1}+Y^{-1})^{-1}=X:Y.
\end{equation}
Moreover,
\[
    (A^{1/2}BA^{1/2})^{-1}
    =
    A^{-1/2}B^{-1}A^{-1/2}
    =
    X^{1/2}YX^{1/2}.
\]
The positive matrices \(X^{1/2}YX^{1/2}\) and \(Y^{1/2}XY^{1/2}\) have the same nonzero eigenvalues, and
\[
    Y^{1/2}XY^{1/2}
    =
    (X^{1/2}Y^{1/2})^*(X^{1/2}Y^{1/2}).
\]
Therefore
\begin{equation}\label{eq:mixed-transform}
    \Tr(A^{1/2}BA^{1/2})^{-r/2}
    =
    \norm{X^{1/2}Y^{1/2}}_r^r.
\end{equation}
Applying Theorem \ref{thm:parallel} to \(X,Y\), and then using \eqref{eq:AB-parallel} and \eqref{eq:mixed-transform}, gives
\[
    \Tr(A+B)^{-r}
    +
    \left(2-2^{-r}\right)
    \Tr(A^{1/2}BA^{1/2})^{-r/2}
    \le
    \Tr A^{-r}+\Tr B^{-r}.
\]
Since \(q=-r\), this is the desired inequality.
\end{proof}

\section{The positive exponent range}\label{sec:positive}

The positive exponent range is a direct application of Lemma \ref{lem:compression}.  This section is included to make explicit that the range \(q\ge 3\) in the Audenaert--Kittaneh problem list is already covered by the norm-compression inequality, with endpoint \(q=2\).

\begin{proposition}\label{prop:positive}
Let \(A,B\ge 0\) and \(q\ge 2\).  Then
\[
    \Tr(A+B)^q
    \ge
    \Tr A^q+\Tr B^q+
    (2^q-2)\Tr(A^{1/2}BA^{1/2})^{q/2}.
\]
\end{proposition}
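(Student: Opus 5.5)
The plan is to realize \(A+B\) and the mixed term as pieces of a single positive semidefinite \(2\times 2\) block matrix, and then apply Lemma \ref{lem:compression} with \(p=q/2\ge 1\)... but that exponent sits on the wrong side of \(2\). So instead I take \(p=q\ge 2\) and build a block matrix whose \(p\)-norm is \(\Tr(A+B)^q\). Write
\[
    A+B=Z^*Z,\qquad Z=\begin{pmatrix}A^{1/2}\\ B^{1/2}\end{pmatrix},
\]
so \(Z\) is a \(2n\times n\) matrix. Then
\[
    M:=ZZ^*=\begin{pmatrix}A & A^{1/2}B^{1/2}\\ B^{1/2}A^{1/2} & B\end{pmatrix}\ge 0 .
\]
Its nonzero eigenvalues coincide with those of \(Z^*Z=A+B\). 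Hence \(\norm{M}_q^q=\Tr(A+B)^q\), because zero eigenvalues contribute nothing to the trace of a positive power \(q>0\).

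Next I apply Lemma \ref{lem:compression} with \(p=q\ge 2\), \(P=A\), \(R=B\) and \(Q=A^{1/2}B^{1/2}\). The diagonal blocks are square \(n\times n\) and \(M\ge 0\), so the lemma gives
\[
    \Tr(A+B)^q\ge \Tr A^q+\Tr B^q+(2^q-2)\norm{A^{1/2}B^{1/2}}_q^q .
\]

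It remains to identify the last term. We have
\[
    |Q^*|^2=QQ^*=A^{1/2}BA^{1/2},
\]
and the singular values of \(Q\) are the square roots of the eigenvalues of \(QQ^*\). Therefore
\[
    \norm{A^{1/2}B^{1/2}}_q^q=\Tr(A^{1/2}BA^{1/2})^{q/2},
\]
which is the desired inequality.

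No inversion is needed, so positive semidefiniteness of \(A,B\) suffices; square roots of semidefinite matrices are well defined. The coefficient \(2^q-2\) is nonnegative for \(q\ge 2\), although the argument does not even use this. The only delicate point is matching the exponents: the block matrix has to be chosen so that its \(p\)-norm is exactly \(\Tr(A+B)^q\) with \(p=q\), so that the norm-compression range \(p\ge 2\) lines up precisely with \(q\ge 2\). The factorization \(M=ZZ^*\) versus \(Z^*Z\) does exactly this.
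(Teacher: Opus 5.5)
Your proposal is correct and is essentially the paper's proof: the same factorization \(A+B=Z^*Z\) with \(M=ZZ^*\), and Lemma \ref{lem:compression} applied with \(p=q\), \(P=A\), \(R=B\), \(Q=A^{1/2}B^{1/2}\). The only cosmetic difference is at the end: you read off \(\norm{A^{1/2}B^{1/2}}_q^q\) directly from \(QQ^*=A^{1/2}BA^{1/2}\), whereas the paper uses \(Q^*Q=B^{1/2}AB^{1/2}\) and then notes that it has the same nonzero eigenvalues as \(A^{1/2}BA^{1/2}\).
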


\begin{proof}
Let
\[
    V=\begin{pmatrix}A^{1/2}\\ B^{1/2}\end{pmatrix},
    \qquad
    M=VV^*
    =
    \begin{pmatrix}
        A&A^{1/2}B^{1/2}\\
        B^{1/2}A^{1/2}&B
    \end{pmatrix}.
\]
Then \(M=VV^*\ge 0\), and \(M\) is a \(2\times2\) positive semidefinite block matrix whose diagonal blocks \(A,B\) are square.  Hence \(M\) satisfies the hypotheses of Lemma \ref{lem:compression}.  The matrices \(VV^*\) and \(V^*V\) have the same nonzero eigenvalues, while
\[
    V^*V=A+B.
\]
Thus
\begin{equation}\label{eq:V-spectrum}
    \norm{M}_q^q=\Tr M^q=\Tr(A+B)^q.
\end{equation}
Applying Lemma \ref{lem:compression} to \(M\), with
\[
    P=A,\qquad R=B,\qquad Q=A^{1/2}B^{1/2},
\]
we obtain
\begin{equation}\label{eq:positive-step}
    \Tr(A+B)^q
    \ge
    \Tr A^q+\Tr B^q+
    (2^q-2)\norm{A^{1/2}B^{1/2}}_q^q.
\end{equation}
Finally,
\[
    \norm{A^{1/2}B^{1/2}}_q^q
    =
    \Tr(B^{1/2}AB^{1/2})^{q/2}.
\]
The positive semidefinite matrices \(B^{1/2}AB^{1/2}\) and \(A^{1/2}BA^{1/2}\) have the same nonzero eigenvalues.  Hence
\[
    \Tr(B^{1/2}AB^{1/2})^{q/2}
    =
    \Tr(A^{1/2}BA^{1/2})^{q/2}.
\]
Substitution into \eqref{eq:positive-step} proves the proposition.
\end{proof}

\section{Equality and sharpness}\label{sec:sharpness}

We first determine the equality case in the negative exponent range.

\begin{proposition}[Equality for negative exponents]\label{prop:equality-negative}
Let \(A,B>0\) and \(q<0\).  Then
\[
    \Tr(A+B)^q
    =
    \Tr A^q+\Tr B^q+
    (2^q-2)\Tr(A^{1/2}BA^{1/2})^{q/2}
\]
if and only if \(A=B\).
\end{proposition}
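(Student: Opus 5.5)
The "if" direction is a direct check. If \(A=B\), then \(\Tr(2A)^q=2^q\Tr A^q\) and \(\Tr(A^{1/2}AA^{1/2})^{q/2}=\Tr A^q\). The right-hand side is then \(2\Tr A^q+(2^q-2)\Tr A^q=2^q\Tr A^q\), so equality holds; equivalently, we are at the equality point \(X=Y\) of Theorem \ref{thm:parallel}.

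For the converse, set \(r=-q>0\), \(X=A^{-1}\), \(Y=B^{-1}\) as in the proof of Theorem \ref{thm:negative}. By \eqref{eq:AB-parallel} and \eqref{eq:mixed-transform}, equality in the statement is the same as equality in \eqref{eq:parallel-theorem}. I will trace back through the chain in the proof of Theorem \ref{thm:parallel-asym} with \(t=1/2\). Writing \(H=X:Y\), \(G=\norm{X^{1/2}Y^{1/2}}_r^r\) and \(S=\Tr X^r+\Tr Y^r\), that chain reads
\[
    \Tr H^r+(2-2^{-r})G\le 2^{-r}G+(2-2^{-r})G=2G\le S.
\]
Equality overall forces equality in both steps, namely \(\Tr H^r=2^{-r}G\) and \(2G=S\).

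I will use the second equality, \(2G=S\). It forces equality in \eqref{eq:holder-special}, and in particular in the scalar AM--GM step, which gives \(\Tr X^r=\Tr Y^r\). I expect the first equality to be more robust to use, because it sandwiches the Ando--Hiai step. We have
\[
    \Tr H^r\le 2^{-r}\Tr(X\#Y)^r\le 2^{-r}G,
\]
where the first inequality comes from \(H=\tfrac12(X!Y)\le\tfrac12(X\#Y)\) together with Lemma \ref{lem:trace-monotone}. Equality therefore gives \(\Tr(X!Y)^r=\Tr(X\#Y)^r\) with \(X!Y\le X\#Y\). The equality clause of Lemma \ref{lem:trace-monotone} then yields \(X!Y=X\#Y\).

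The remaining step, and the main obstacle, is to show that \(X!Y=X\#Y\) forces \(X=Y\). I will congruence by \(X^{-1/2}\), setting \(Z=X^{-1/2}YX^{-1/2}>0\). Since both means are congruence-covariant, the identity becomes \(1!Z=1\#Z\), that is \(2Z(1+Z)^{-1}=Z^{1/2}\). Everything commutes with \(Z\), so this is a scalar identity on the spectrum: \(2\lambda/(1+\lambda)=\lambda^{1/2}\). This rearranges to \((1-\lambda^{1/2})^2=0\), hence \(\lambda=1\) for every eigenvalue. Thus \(Z=I\), which gives \(Y=X\) and hence \(A=B\). The only points I need to check carefully are that the congruence reduction preserves both means exactly, which is the Kubo--Ando transformer property, and that the equality clause of Lemma \ref{lem:trace-monotone} applies with \(C=X!Y\), \(D=X\#Y\) and exponent \(r>0\). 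Both hold as stated earlier in the paper.
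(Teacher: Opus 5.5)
Your proposal is correct and follows the paper's proof essentially step for step. You force equality in \(\Tr H^r\le 2^{-r}\Tr(X\#Y)^r\), apply the equality clause of Lemma \ref{lem:trace-monotone} to get \(X!Y=X\#Y\) (which the paper writes as \(X:Y=\frac12(X\#Y)\)), and reduce via \(Z=X^{-1/2}YX^{-1/2}\) to \((z^{1/2}-1)^2=0\); your side remark that \(2G=S\) gives \(\Tr X^r=\Tr Y^r\) is correct but, as you say, not needed.
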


\begin{proof}
If \(A=B\), direct substitution gives equality.

Conversely, put \(q=-r\), \(X=A^{-1}\), and \(Y=B^{-1}\).  If equality holds in the displayed formula, then equality must hold in the estimates used to prove Theorem \ref{thm:parallel-asym} at \(t=1/2\).  More explicitly, with
\[
    H=X:Y,\qquad
    G=\norm{X^{1/2}Y^{1/2}}_r^r,\qquad
    S=\Tr X^r+\Tr Y^r,
\]
the proof used
\[
    \Tr H^r\le 2^{-r}\Tr(X\#Y)^r\le 2^{-r}G,
    \qquad
    2G\le S.
\]
Equality in the final result forces equality throughout these two chains.  In particular, from
\[
    H=X:Y\le \frac12(X\#Y)
\]
and Lemma \ref{lem:trace-monotone}, we get
\[
    X:Y=\frac12(X\#Y).
\]
Let \(Z=X^{-1/2}YX^{-1/2}\).  By the Kubo--Ando representations, the last equality is equivalent to
\[
    I:Z=\frac12 Z^{1/2}.
\]
On the spectrum of \(Z\), this says
\[
    \frac{z}{1+z}=\frac12 z^{1/2},\qquad z>0.
\]
Equivalently, \((z^{1/2}-1)^2=0\).  Thus \(Z=I\), so \(X=Y\), and hence \(A=B\).
\end{proof}

The coefficient \(2^q-2\) is forced by the symmetric normalization.  Indeed, if \(A=B>0\), then
\[
    \Tr(A+B)^q=2^q\Tr A^q,
    \qquad
    \Tr(A^{1/2}BA^{1/2})^{q/2}=\Tr A^q.
\]
Therefore
\[
    \Tr A^q+\Tr B^q+
    (2^q-2)\Tr(A^{1/2}BA^{1/2})^{q/2}
    =
    2^q\Tr A^q.
\]
Thus both directions in Theorem \ref{thm:main} are equalities when \(A=B\).  On the negative side, since \(2^q-2<0\), the coefficient cannot be decreased.  On the positive side, the coefficient cannot be increased.

For completeness, we also record the scalar normalization.  For positive numbers \(a,b\), let
\[
    \Phi_q(a,b)
    =
    (a+b)^q-a^q-b^q-(2^q-2)(ab)^{q/2}.
\]
By homogeneity, one may take \(a=e^t\), \(b=e^{-t}\).  Then
\begin{equation}\label{eq:scalar-normalization}
    \Phi_q(e^t,e^{-t})
    =
    2^q(\cosh^q t-1)-2(\cosh(qt)-1).
\end{equation}
Near \(t=0\),
\begin{equation}\label{eq:scalar-expansion}
    \Phi_q(e^t,e^{-t})
    =
    q(2^{q-1}-q)t^2+O(t^4).
\end{equation}
This local expansion is consistent with the signs in Theorem \ref{thm:main}: the quadratic coefficient is negative for \(q<0\) and non-negative for \(q\ge 2\), with equality at \(q=2\).  The proof above is not a reduction to the scalar case; the scalar formula only explains the coefficient and the change of direction.

\section{Conclusion}

Theorem \ref{thm:negative} and Proposition \ref{prop:positive} together prove Theorem \ref{thm:main}.  The positive exponent range is a direct consequence of Audenaert's norm-compression inequality for positive semidefinite block matrices.  The new part is the negative exponent range \(q<0\), in particular the interval \(-2<q<0\) left open in the Audenaert--Kittaneh problem list.  The proof is based on the symmetric case of the parallel sum trace inequality \eqref{eq:parallel-theorem}, and the equality analysis shows that equality in the negative exponent range occurs exactly at \(A=B\).


\begin{thebibliography}{10}

\bibitem{AndoHiai1994}
T.~Ando and F.~Hiai.
\newblock Log majorization and complementary Golden--Thompson type inequalities.
\newblock \emph{Linear Algebra and its Applications}, 197/198:113--131, 1994.

\bibitem{Audenaert2006NormCompression}
K.~M.~R. Audenaert.
\newblock A norm compression inequality for block partitioned positive semidefinite matrices.
\newblock \emph{Linear Algebra and its Applications}, 413(1):155--176, 2006.

\bibitem{Audenaert2012Trace}
K.~M.~R. Audenaert.
\newblock Trace inequalities for completely monotone functions and Bernstein functions.
\newblock \emph{Linear Algebra and its Applications}, 437(2):601--611, 2012.

\bibitem{AudenaertKittaneh2012Problems}
K.~M.~R. Audenaert and F.~Kittaneh.
\newblock Problems and conjectures in matrix and operator inequalities, 2012.
\newblock arXiv:1201.5232.

\bibitem{Bhatia1997}
R.~Bhatia.
\newblock \emph{Matrix Analysis}, volume 169 of \emph{Graduate Texts in Mathematics}.
\newblock Springer, New York, 1997.

\bibitem{Bhatia2007}
R.~Bhatia.
\newblock \emph{Positive Definite Matrices}.
\newblock Princeton Series in Applied Mathematics. Princeton University Press, Princeton, 2007.

\bibitem{KuboAndo1980}
F.~Kubo and T.~Ando.
\newblock Means of positive linear operators.
\newblock \emph{Mathematische Annalen}, 246:205--224, 1980.

\bibitem{McCarthy1967}
C.~A. McCarthy.
\newblock $c_p$.
\newblock \emph{Israel Journal of Mathematics}, 5:249--271, 1967.

\end{thebibliography}
\end{document}